\newtheorem{theorem}{\sc Theorem}[section]
\newtheorem{eje}{\sc Example }[section]
\newtheorem{obs}{\sc Remark}[section]
\newcommand{\dps}{\displaystyle}
\mathchardef\pFcomma=\mathcode`, 
\def\BState{\State\hskip-\ALG@thistlm}
\DeclareFontFamily{U}{mathb}{}
\DeclareFontShape{U}{mathb}{m}{n}{<-5.5> mathb5 <5.5-6.5> mathb6 
<6.5-7.5> mathb7 <7.5-8.5> mathb8 <8.5-9.5> mathb9 <9.5-11> mathb10 
<11-> mathb12}{}
\DeclareSymbolFont{mathb}{U}{mathb}{m}{n}
\DeclareMathSymbol{\blackdiamond}{\mathbin}{mathb}{"0C}
\def\downbar#1{
\setbox10=\hbox{$#1$}
            \dimen10=\ht10 \advance\dimen10 by 2.5pt
            \ifdim \dimen10<15pt 
               \advance\dimen10 by -0.5pt
               \dimen11=\dimen10
               \advance\dimen10 by 2.5pt
               \lower \dimen11
            \else \lower \ht10 \fi
            \hbox {\hskip 1.5pt \vrule height \dimen10 depth \dp10}}
\def\upbar#1{
\setbox10=\hbox{$#1$}
            \dimen10=\ht10 \advance\dimen10 by \dp10 \advance\dimen10 by 2.5pt
            \ifdim \dimen10<15pt 
                \advance\dimen10 by 2pt \fi
            \raise 2.5pt \hbox {\hskip -1.5pt \vrule height \dimen10}}
\begin{document}
\title[On zeros of discrete POPUC]{On zeros of discrete paraorthogonal polynomials on the unit circle}

\author{G. Gordillo-Núñez}
\address{CMUC, Department of Mathematics, University of Coimbra, 3000-143 Coimbra, Portugal}
\email{up202310693@up.pt}
\author{A. Suzuki}
\address{CMUC, Department of Mathematics, University of Coimbra, 3000-143 Coimbra, Portugal}
\email{asuzuki@uc.pt}
\begin{abstract}
In this note we investigate, as  a natural continuation of [K. Castillo, Constr. Approx., 55 (2022) 605-627],  the behaviour of the zeros of discrete paraorthogonal polynomials on the unit circle with respect to a real parameter. 
\end{abstract}
\subjclass[2010]{30C15, 42C05}
\date{\today}
\keywords{Markov's theorem, paraorthogonal polynomials on the unit circle, zeros, mass points}
\maketitle
\section{Introduction}\label{intro}


Denote by $\mathbb{S}^1_r(c)$ the boundary of the open disk $\mathbb{D}_r(c)$ of radius $r>0$ and center at $c$, and write $C_r(c)=\mathbb{D}_r(c) \cap \mathbb{S}^1$ and $I_r(c)=\mathbb{D}_r(c)\cap \mathbb{R}$. For abbreviation, we use the notation $\mathbb{D}=\mathbb{D}_1(0)$ and $\mathbb{S}^1=\mathbb{S}^1_1(0)$. Let $\mathrm{d}\mu(\theta; t)$ be a finite nonnegative measure with infinite support (not a finite combination of point masses\footnote{The results of this epigraph are still valid, mutatis mutandis, when $\mathrm{d}\mu(\theta;t)$ has finite support. For instance, if $\mathrm{d}\mu(\theta;t)$ is a finite linear combination of $N$ distinct pure points, we only have OPUC up to the degree $N-1$.}) 
on $\mathbb{S}^1$ parametrized by $z=e^{i\theta}$ and depending on a parameter $t$ varying in a real open interval. We shall write simply $\mathrm{d}\mu$ when no confusion can arise. Let $(Q_n(\cdot; t))_{n\in\mathbb{N}}$ be the unique sequence of monic orthogonal polynomials on the unit circle (OPUC) associated with $\mathrm{d}\mu$, that is, polynomials $Q_n(z; t)=z^n+\cdots$ which satisfy
\begin{align*}
 \int Q_n(e^{i \theta}; t)\overline{Q_m(e^{i \theta}; t)}\,\mathrm{d}\mu(\theta; t)&=0 \quad (n\not=m=0,1,\dots),\\[7pt]
\int |Q_n(e^{i \theta}; t)|^2\mathrm{d}\mu(\theta; t)&\not=0.
\end{align*}
Fix $n \in \mathbb{N}\setminus\{0\}$. The corresponding monic paraorthogonal polynomial on the unit circle (POPUC) of degree $n+1$ associated with $\mathrm{d}\mu$ and a paraorthogonality parameter $b(t) \in \mathbb{S}^1$ is defined by (see \cite[p. $115$]{S11})
\begin{align}
\label{popuc} P_{n+1}(z; b(t); t)=P_{n+1}(z; t)=zQ_{n}(z; t)-\overline{b(t)} \,Q^*_{n}(z; t),
\end{align}
where $Q_n^*(z; t)=z^{n} \overline{Q_n(1/\overline{z}; t)}$. The POPUC satisfy
\begin{align}\label{exp}
\int  P_{n+1}(e^{i\theta}; t) \overline{g(e^{i\theta})} \,\mathrm{d}\mu(\theta; t)=0,
\end{align}
for any polynomial $g$ of degree at most $n$ vanishing at the origin. Assuming $P_{n+1}(\zeta(t); t)=0$, it is easy to see that (see \cite[(5)]{C22}) 
 \begin{align}\label{property}
\int  \frac{P_{n+1}(e^{i \theta}; t)}{e^{i \theta}-\zeta(t)}\,\, \overline{h(e^{i\theta})}\mathrm{d}\mu(\theta; t)=\overline{h(\zeta)} \int \frac{P_{n+1}(e^{i \theta}; t)}{e^{i \theta}-\zeta(t)} \mathrm{d}\mu(\theta; t),
\end{align}
for any nonzero polynomial $h$ of degree at most $n$.  In the numerical linear algebra the name POPUC is rarely used; however these polynomials were first discovered in this context (see \cite{C19} for historical comments). The two fundamental properties of the zeros of POPUC are the following (see \cite[Theorem $9.1.$]{G48}): (1) All the zeros of POPUC lie on $\mathbb{S}^1$; (2) The zeros of POPUC are all simple. It is important to emphasize that we always have the freedom to set a zero of a POPUC by appropriately choosing the paraorthogonality parameter. Indeed, given $\xi \in \mathbb{S}^1$ and  $\mathrm{d}\mu$, the corresponding normalized POPUC of degree $n+1$ with paraorthogonality parameter 
\begin{align*}
b(t)=\overline{\xi}\, \frac{\overline{Q_{n}(\xi; t)}}{\overline{Q^*_{n}(\xi; t)}}
\end{align*}
has a zero at $\xi$.

In applied problems, measures depending on one or more parameters are frequent. For instance, consider the measure
 \begin{align*}
\frac{|\Gamma(r+is+1)|^2}{\Gamma(2 r+1)}\,(2-2\cos \theta)^r\dps(-e^{i\theta})^{i s}\, \frac{\mathrm{d}\theta}{2\pi},
\end{align*}
for $r \in (-1/2, \infty)$, $s \in (-\infty, \infty)$, and $\theta \in [-\pi,\pi)$.  This measure belongs to a class of measures introduced by Fisher and Hartwig which has been the subject of numerous investigations $($see \cite{DIK11} and references therein$)$.  The behaviour of nonconstant zeros of POPUC associated with  $\mathrm{d}\mu(\theta; r, s)$ as $r$ and $s$ increase are studied in \cite{C22} as a consequence of the following:

\vspace{1mm}
\begin{changemargin}{0.8cm}{0.8cm} 
{\em Given a finite nonnegative measure $\omega(\theta; t)\, \mathrm{d}\mu(\theta)$ with infinite support on $\mathbb{S}^1$ parametrized by $z=e^{i\theta}$ $(\theta \in [\theta_0, \theta_0+2\pi))$ and depending on a parameter $t$ varying  in a real open interval, under suitable conditions, a nonconstant zero $\zeta(t)$ of a POPUC with a fixed zero at $e^{i\theta_0}$ moves strictly clockwise along $\mathbb{S}^1$ as $t$ increases on $I_\epsilon(t_0)$, provided that
 \begin{align*}
\frac{1}{\omega(\theta; t)}\frac{\partial \omega}{\partial t}(\theta; t)
\end{align*}
is a strictly increasing function of $\theta$ on  $(\theta_0, \theta_0+2\pi)$.}
\end{changemargin}
\vspace{1mm}

The results of \cite{C22} do not consider the possibility that the parameter appears in the discrete part of the measure. In fact, if the starting point is a measure formed by adding a finite linear combination of distinct pure points to $\mathrm{d}\mu(\theta;t)=\omega(\theta;t)\mathrm{d}\mu(\theta)$, i.e.,
\begin{align}\label{discrete}
\mathrm{d}\mu(\theta; t)+\sum_{j=0}^N \gamma_j(t)\, \delta(\theta-\omega_j(t)),
\end{align}
we cannot conclude anything about the behaviour of the zeros of the corresponding POPUC. Some properties of OPUC associated with measure perturbed by adding mass points have been studied by Simon (\cite[Section 10.13]{S05II} and references therein) and Wong (see \cite{W09}), among others. 
Moreover, if $\mathrm{d}\mu=0$ in \eqref{discrete}, we have (finite) discrete OPUC (and POPUC):
\begin{align*}
\sum_{j=0}^N  \gamma_j(t)\, Q_n(e^{i \omega_j(t)};t)\overline{Q_m(e^{i \omega_j(t)};t)}&=0 \quad(n\not=m=0,1,\dots, N-1),\\
\sum_{j=0}^N  \gamma_j(t)\, |Q_n(e^{i \omega_j(t)};t)|^2&\not=0.
\end{align*}
This finite discrete OPUC plays a fundamental role in several problems. For instance, when a certain function is being approximated, the approximation is constructed as a projection based on a discrete set of points. Geronimo and Liechty estimated the absolute error in the approximation and the error can be expressed in terms of a finite system of discrete OPUC (see \cite[Section 1.1]{GL20}).

Let us consider two examples of POPUC whose behaviour of zeros is impossible to deduce with the current results in the literature.

\begin{eje}
Consider the measure formed by adding a point mass to the degree one Bernstein-Szeg\H{o} measure \cite[Example 1.6.2]{S05I}$:$ 
$$
\frac{1-|\lambda|^2}{|1-\lambda e^{i \theta}|}\,\frac{\mathrm{d}\theta}{2\pi}+\gamma\, \delta(\theta-\omega)\quad (\lambda \in \mathbb{D},\, \gamma \in (0, \infty),\, \omega \in [0, 2\pi)).
$$
We leave it to the reader to verify that the OPUC associated with the above measure are given by 
\begin{align*}
&Q_n(z; \lambda; \gamma,\omega)=z^n-\overline{\lambda} z^{n-1}\\[7pt]
&\quad -\gamma\dfrac{e^{i(n-1)\omega}(e^{i\omega}-\overline{\lambda})}{1+\gamma\left(1+(n-1)\dfrac{|e^{i\omega}-\overline{\lambda}|^2}{1-|\lambda|^2}\right)}\left(1+\dfrac{(z-\overline{\lambda})(e^{-i\omega}-\lambda)(1-(e^{-i\omega}z)^{n-1})}{(1-|\lambda|^2)(1-e^{-i\omega}z)}\right).
\end{align*}
Define
\begin{align*}
P_{n+1}(z; \gamma)&=zQ_{n}(z;-i/3;\gamma,2\pi/3)-i\dfrac{Q_{n}(i;-i/3;\gamma,2\pi/3)}{Q_{n}^{*}(i;-i/3;\gamma,2\pi/3)}Q^*_{n}(z;-i/3;\gamma,2\pi/3),\\[7pt]
P_{n+1}(z; \omega)&=zQ_{n}(z;-i/3;1,\omega)-i\dfrac{Q_{n}(i;-i/3;1,\omega)}{Q_{n}^{*}(i;-i/3;1,\omega)}Q^*_{n}(z;-i/3;1,\omega).
\end{align*}
Figure \ref{fig0} shows the behaviour of the zeros of $P_{n+1}(z; \gamma)$ and $P_{n+1}(z; \omega)$ for different values of $\gamma$ and $\omega$, respectively. Note that both $P_5(z;\gamma)$ and $P_5(z;\omega)$ have a fixed zero at $z=i$. The zeros of $P_5(z;\gamma)$, except the fixed one, move strictly counterclockwise.

\begin{figure}[H]
  \centering
  \begin{subfigure}[b]{0.45\linewidth}
    \includegraphics[width=\linewidth]{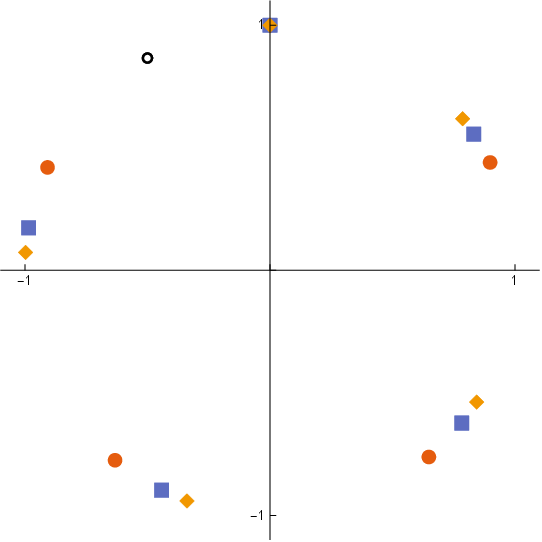}
  \end{subfigure}
  \begin{subfigure}[b]{0.45\linewidth}
    \includegraphics[width=\linewidth]{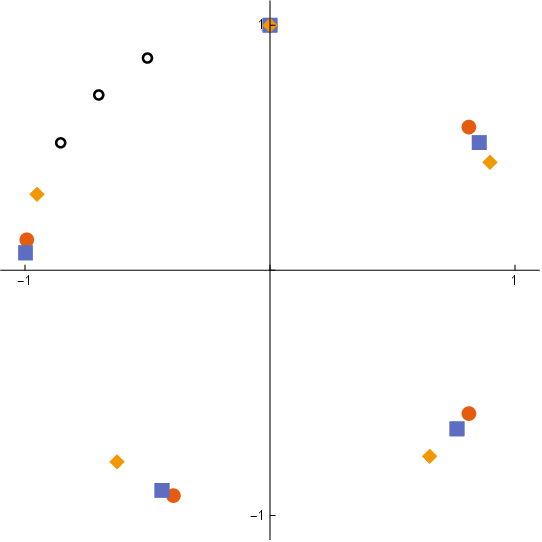}
   
  \end{subfigure}
  \caption{Zeros of $P_{5}(z; \gamma)$ (left plot)  and $P_{5}(z; \omega)$ (right plot)  for $\gamma$ equals $0.01$  (orange discs), $0.5$ (blue squares) and $5$ (yellow diamonds), and $\omega$ equals $2\pi/3$ (orange discs), $2\pi/3+1/4$ (blue squares), and $2\pi/3+1/2$ (yellow diamonds), respectively. The open discs represent the location of the mass points.}
  \label{fig0}
\end{figure}
\end{eje}

\begin{eje}
Consider the probability measure formed by adding a point mass to the Lebesgue measure \cite[Example 1.6.3]{S05I}
$$
(1-\gamma)\dfrac{\mathrm{d}\theta}{2\pi}+\gamma\delta(\theta-0)\quad (\gamma\in(0,1)).
$$
It follows easily that the POPUC associated with the above measure for a paraorthogonality parameter $b\in\mathbb{S}^1$ is given by
\begin{align*}
P_{n+1}(z;b;\gamma)&=z^{n+1}-\overline{b}-\dfrac{(1-\overline{b})\gamma}{1+(n-1)\gamma}\dps\sum_{j=1}^{n}z^j.
\end{align*}
Figure \ref{fig2} shows the behaviour of the zeros of $P_{n+1}(z;i;\gamma)$ and $P_{n+1}(z;-1;\gamma)$ for different values of $\gamma$. Note that $P_{5}(z;i;\gamma)$ and $P_{5}(z;-1;\gamma)$ have a fixed zero at $z=-1$ and $z=i$, respectively. 

\begin{figure}[H]
  \centering
  \begin{subfigure}[b]{0.45\linewidth}
    \includegraphics[width=\linewidth]{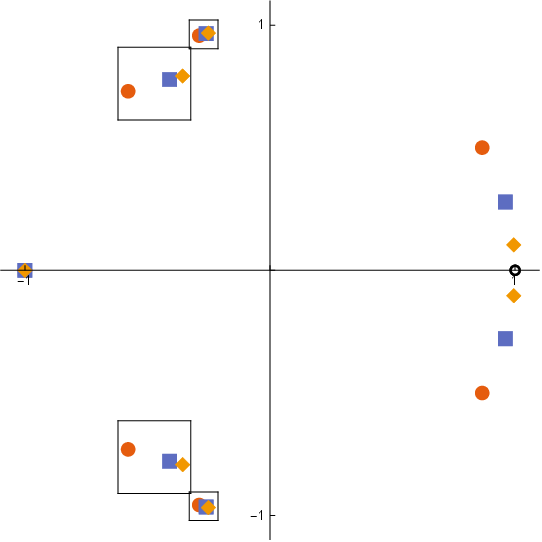}
  \end{subfigure}
  \begin{subfigure}[b]{0.45\linewidth}
    \includegraphics[width=\linewidth]{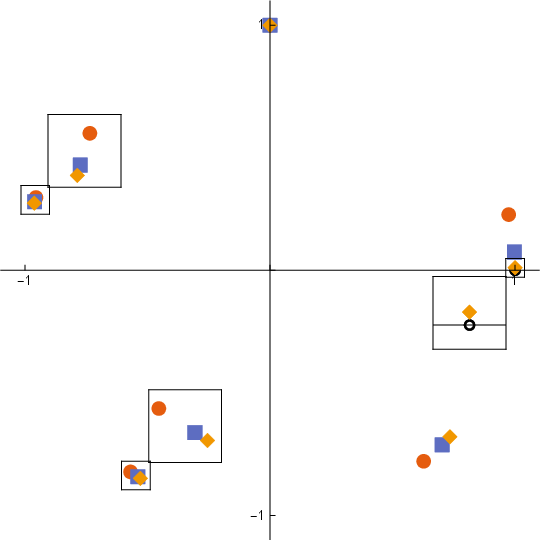}
   
  \end{subfigure}
  \caption{Zeros of $P_{5}(z;i;\gamma)$ (left plot)  and $P_{5}(z;-1;\gamma)$ (right plot)  for $\gamma$ equals $0.1$  (orange discs), $0.5$ (blue squares) and $0.9$ (yellow diamonds). The open discs represent the location of the mass points. Notice that some zeros have been zoomed in on for clarity.}
  \label{fig2}
\end{figure}

\end{eje}

In the next section we prove a ``discrete'' version of the results of \cite{C22} that will allow us to cover, in particular, the cases considered in the previous examples.
\section{Main results}\label{sec2}


We begin this section with the following result.

\begin{theorem}\label{main}
Let $$\mathrm{d}\mu(\theta; t)=\sum_{j=0}^{N}\gamma_j(t)\, \delta(\theta-\omega_j(t))$$ be a finite nonnegative measure on the unit circle parametrized by $z=e^{i\theta}$ and depending on a parameter $t$ varying  in a real open interval containing $t_0$. Suppose that $\gamma_j(t)$ and $\omega_j(t)$ are differentiable functions of $t$. Let $P(\cdot; t)$ be a nonconstant monic POPUC of degree $n$ associated with $\mathrm{d}\mu(\theta; t)$.  Assume that $P(\zeta_0; t_0)=0$. Then there exist $\epsilon>0$ and $\delta>0$, and a unique function, $\zeta: I_\epsilon(t_0) \to C_\delta(\zeta_0)$, differentiable on $I_\epsilon(t_0)$, with $\zeta(t_0)=\zeta_0$, and such that $P(\zeta(t); t)=0$ for each $t \in I_\epsilon(t_0)$. Assume that $P(e^{i\theta_0};t)=0$ for each $t\in  I_\epsilon(t_0)$, with $\zeta_0\neq e^{i\theta_0}$. Denote the zeros of $P(\cdot;t)$ by $\zeta_k(t)=e^{i\varphi_k(t)}$ $(k=1,\dots,n)$ and define
\begin{align}
s(\theta;t)&= \dfrac{\sin{\left(\dfrac{\varphi(t)-\theta_0}{2}\right)}}{2\sin{\left(\dfrac{\varphi(t)-\theta}{2}\right)}\sin{\left(\dfrac{\theta_0-\theta}{2}\right)}},\label{sf}\\[7pt]
S(\theta;t)&= \dps\sum_{k=1}^{n}\; \dfrac{1}{1+\delta_{k,n-1}+\delta_{k,n}}\, \cot{\left(\dfrac{\varphi_k(t)-\theta}{2}\right)},\label{Sf}
\end{align}
where we have set $\theta_0=\varphi_{n-1}(t)$ and $\varphi(t) = \varphi_{n}(t)$. Then $\zeta(t)=e^{i\varphi(t)}$ moves strictly counterclockwise along $\mathbb{S}^1$ as $t$ increases on $I_\epsilon(t_0)$, provided that
\begin{align*}
W_j(t) = s(\omega_j(t);t)\,\dfrac{\mathrm{d}\gamma_j}{\mathrm{d}t}(t)-\gamma_j(t)s(\omega_j(t);t)S(\omega_j(t);t)\dfrac{\mathrm{d}\omega_j}{\mathrm{d}t}(t)
\end{align*}
is nonnegative for all $j$ and nonzero for at least one value of $j$, and $\omega_j(t) \neq \varphi_k(t)$ for all $j,k$. Furthermore, if $W_j(t)=0$ for all $t\in I_\epsilon(t_0)$ and all $j$, then $\zeta(t)$ does not move as $t$ increases on $I_{\epsilon}(t_0)$.

\end{theorem}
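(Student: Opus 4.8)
The plan is to reduce the monotonicity to a single scalar identity extracted from the paraorthogonality relation \eqref{property}, and then to differentiate that identity in $t$, using \eqref{exp} to annihilate the velocities of the irrelevant zeros. For existence and regularity, note that the coefficients of $P(\cdot;t)$ are rational functions of the moments $c_k(t)=\sum_j\gamma_j(t)e^{-ik\omega_j(t)}$ with nonvanishing denominators, hence differentiable; and by property $(2)$ of \cite{G48} the zeros of $P$ are simple, so $\partial_z P(\zeta_0;t_0)\neq 0$. The implicit function theorem applied to $P(z,t)=0$ at $(\zeta_0,t_0)$ yields the unique differentiable branch $\zeta(t)$, and by property $(1)$ of \cite{G48} it stays on $\mathbb{S}^1$, so $\zeta(t)=e^{i\varphi(t)}\in C_\delta(\zeta_0)$ once $\epsilon$ is small; the same reasoning makes every zero $\varphi_k(t)$ differentiable, with $\varphi_{n-1}'\equiv 0$ since $e^{i\theta_0}$ is held fixed.

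For the master identity, set $h_0(z)=P(z;t)/(z-e^{i\theta_0})$, a polynomial of degree $n-1$ vanishing at $\zeta$. Then \eqref{property} gives $\int \frac{P}{e^{i\theta}-\zeta}\,\overline{h_0}\,\mathrm{d}\mu=\overline{h_0(\zeta)}\int\frac{P}{e^{i\theta}-\zeta}\,\mathrm{d}\mu=0$. Evaluating the integrand on $\mathbb{S}^1$ gives $\frac{P}{e^{i\theta}-\zeta}\,\overline{h_0}=\frac{|P|^2}{(e^{i\theta}-\zeta)\,\overline{(e^{i\theta}-e^{i\theta_0})}}$, and using that the kernel \eqref{sf} is exactly $s(\theta;t)=\frac{i e^{i\theta}(\zeta-e^{i\theta_0})}{(e^{i\theta}-\zeta)(e^{i\theta}-e^{i\theta_0})}$, this collapses to the real identity $F(t):=\sum_j\gamma_j\,|P(e^{i\omega_j};t)|^2\,s(\omega_j;t)=0$. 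Writing $P=(e^{i\theta}-\zeta)(e^{i\theta}-e^{i\theta_0})R$ with $R$ monic of degree $n-2$, an equivalent form is $\sum_j\gamma_j|R(e^{i\omega_j})|^2\sin\frac{\varphi-\omega_j}{2}\sin\frac{\theta_0-\omega_j}{2}=0$.

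Now differentiate $F\equiv 0$, splitting the chain rule into the contributions of the masses $\gamma_j'$, of the zero locations $\varphi_k'$ (at frozen node), and of the node velocities $\omega_j'$. Three computations drive the argument. First, $\partial_{\varphi_k}\log|P(e^{i\omega_j})|^2=\cot\frac{\varphi_k-\omega_j}{2}$, and for $k\neq n-1,n$ the quantity $\sum_j\gamma_j|P(e^{i\omega_j})|^2 s(\omega_j)\cot\frac{\varphi_k-\omega_j}{2}$ equals $(\zeta-e^{i\theta_0})\,\overline{\int P\,\overline{Q}\,\mathrm{d}\mu}$ for a polynomial $Q$ of degree $\le n-1$ with $Q(0)=0$, hence vanishes by \eqref{exp}; thus the velocities of the $n-2$ nondistinguished zeros drop out, and $\varphi_{n-1}'=0$. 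Second, a direct computation gives $\partial_\theta\big(|P(e^{i\theta})|^2 s(\theta;t)\big)=-|P(e^{i\theta})|^2 s(\theta;t)\,S(\theta;t)$, with $S$ as in \eqref{Sf} (the weights $\tfrac12$ at $k=n-1,n$ arising precisely because $s$ already carries the factors at $\zeta$ and $e^{i\theta_0}$), which turns the node-velocity term into $-\sum_j\gamma_j\omega_j'|P(e^{i\omega_j})|^2 s(\omega_j)S(\omega_j)$. Collecting terms yields $\sum_j|P(e^{i\omega_j})|^2 W_j+\Lambda\,\varphi'=0$, with $\Lambda=\sum_j\gamma_j|P(e^{i\omega_j})|^2\big(s\cot\frac{\varphi-\omega_j}{2}+\partial_\varphi s\big)\big|_{\omega_j}$ the coefficient of the single surviving velocity $\varphi'=\varphi_n'$.

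It remains to sign $\Lambda$. From $\partial_\varphi s=s\big(\frac12\cot\frac{\varphi-\theta_0}{2}-\frac12\cot\frac{\varphi-\omega_j}{2}\big)$ and $F=0$ (which kills the $\theta_0$-term) one gets $\Lambda=\frac12\sum_j\gamma_j|P(e^{i\omega_j})|^2 s(\omega_j)\cot\frac{\varphi-\omega_j}{2}$. Passing to the $R$-form and writing $v_j=\frac{\theta_0-\omega_j}{2}$, $c=\frac{\varphi-\theta_0}{2}$ so that $\frac{\varphi-\omega_j}{2}=v_j+c$, the master identity reads $\sum_j a_j\sin(v_j+c)\sin v_j=0$ with $a_j=\gamma_j|R(e^{i\omega_j})|^2\ge 0$; solving it for $\sum_j a_j\sin v_j\cos v_j$ and substituting collapses $\Lambda$ to $\Lambda=-4\sum_j\gamma_j|R(e^{i\omega_j})|^2\sin^2\frac{\theta_0-\omega_j}{2}<0$. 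Since $\omega_j\neq\varphi_k$ forces $|P(e^{i\omega_j})|^2>0$, we obtain $\varphi'=-\Lambda^{-1}\sum_j|P(e^{i\omega_j})|^2 W_j$, so $W_j\ge 0$ with at least one strict gives $\varphi'>0$ (counterclockwise), while $W_j\equiv 0$ for all $j$ gives $\varphi'\equiv 0$. The main obstacle is exactly these last two steps: producing a closed expression whose $\varphi'$-coefficient is sign-definite. This rests both on the \eqref{exp}-cancellation eliminating the $n-2$ spurious zero-velocities and, decisively, on re-using $F=0$ to rewrite $\Lambda$ as the manifestly negative sum above — without that second use of the master identity the coefficient $\Lambda$ is not visibly of one sign.
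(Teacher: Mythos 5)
Your argument is correct — I checked the three computations it rests on (the logarithmic derivatives $\partial_{\varphi_k}\log|P|^2=\cot\frac{\varphi_k-\omega_j}{2}$ and $\partial_\theta(|P|^2s)=-|P|^2sS$, the \eqref{exp}-cancellation of the $n-2$ spurious velocities via $Q(z)=zP(z)(e^{i\varphi_k}+z)/((z-\xi)(z-\zeta)(z-\zeta_k))$, and the evaluation $\Lambda=-4\sum_j\gamma_j|R(e^{i\omega_j})|^2\sin^2\frac{\theta_0-\omega_j}{2}$) — but it is organized quite differently from the paper's proof. The paper starts from the formula \cite[(15)]{C22}, which hands it $\varphi'(t)\,C(t)=(\text{numerator involving }\overline{\partial_tP_n})$ with the manifestly positive denominator $C(t)=\int|P_n(e^{i\theta};t)/(e^{i\theta}-\zeta(t))|^2\mathrm{d}\mu$, and then converts the numerator into $\sum_jW_j|P_n(e^{i\omega_j};t)|^2$ by a two-variable device: it writes the orthogonality relation \eqref{po} for a pair $(\tau,t)$, differentiates in $t$ with $\tau$ frozen (so the zero velocities of $P_n(\cdot;\tau)$ never appear), and only then lets $\tau\to t$. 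Your ``master identity'' $F(t)=\sum_j\gamma_j|P(e^{i\omega_j};t)|^2s(\omega_j;t)=0$ is precisely the diagonal $\tau=t$ of \eqref{po}, and by differentiating it totally you are forced to confront all $n$ zero velocities, which costs you two extra steps the paper avoids: the \eqref{exp}-cancellation and, decisively, the second use of $F=0$ to show that the coefficient $\Lambda$ of the surviving velocity $\varphi'$ is negative. The reward is that your proof is self-contained — it never invokes \cite[(15)]{C22} — and it ends at the same place: your $\Lambda$ is exactly $-C(t)$, so your closing relation $\sum_j|P(e^{i\omega_j})|^2W_j+\Lambda\varphi'=0$ is the paper's identity \eqref{C1}. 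Two small points to tidy up: you should state explicitly that $n\ge2$ and that the measure has at least $n$ mass points with $\gamma_j>0$ (needed both for the POPUC of degree $n$ to exist and for the strict negativity of $\Lambda$, though the hypothesis $\omega_j\neq\varphi_k$ already gives $|R(e^{i\omega_j})|^2\sin^2\frac{\theta_0-\omega_j}{2}>0$ for every $j$); and note that \eqref{exp} as printed concerns a POPUC of degree $n+1$ paired against $g$ of degree at most $n$, so for your degree-$n$ POPUC the correct bound on $Q$ is degree at most $n-1$, which your $Q$ satisfies.
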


\begin{proof}
The first part of the theorem follows from the proof of \cite[Proposition 3.1]{C22} and the details are left to the reader.

Assume $P$ has fixed degree $n\geq2$ and write $P_n$ instead of $P$. Set $\xi=e^{i\theta_0}$. 
Following the proof of \cite[Theorem 3.1]{C22} and using \cite[(15)]{C22}, we obtain, after elementary computations,
\begin{align}\label{C}
\dfrac{\mathrm{d}\varphi}{\mathrm{d}t}(t)
&=\dps\frac{\dps \sum_{j=0}^{N}\gamma_j(t)\dfrac{ie^{i\omega_j(t)}(\xi-\zeta(t))P_n(e^{i\omega_j(t)};t)}{(e^{i\omega_j(t)}-\xi)(e^{i\omega_j(t)}-\zeta(t))}\left.\overline{\dfrac{\partial P_n}{\partial t}(e^{i\theta};t)}\;\right|_{\theta=\omega_j(t)}}{\dps \int\left|\dfrac{P_n(e^{i\theta};t)}{e^{i\theta}-\zeta(t)}\right|^2\mathrm{d}\mu(\theta;t)},
\end{align}
with the denominator on the right-hand side being positive.
Let $\tau\in I_{\epsilon}(t_0)$. Since
$$
\dfrac{zP_n(z;\tau)}{(z-\xi)(z-\zeta(\tau))}
$$
is a nonzero polynomial of degree $n-1$ vanishing at the origin, we have, by \eqref{exp},
\begin{align}\label{po}
\dps\sum_{j=0}^{N}\gamma_j(t)\dfrac{e^{i\omega_j(t)}P_n(e^{i\omega_j(t)};\tau)}{(e^{i\omega_j(t)}-\xi)(e^{i\omega_j(t)}-\zeta(\tau))}\overline{P_n(e^{i\omega_j(t)};t)}=0.
\end{align}
Now, taking partial derivative of \eqref{po} with respect to $t$ yields
\begin{align}\label{pdpo}
&\dps\sum_{j=0}^{N}\gamma_j(t)\dfrac{e^{i\omega_j(t)}P_n(e^{i\omega_j(t)};\tau)}{(e^{i\omega_j(t)}-\xi)(e^{i\omega_j(t)}-\zeta(\tau))}\overline{\dfrac{\mathrm{d}P_n}{\mathrm{d} t}(e^{i\omega_j(t)};t)}\\[7pt]
& =-\dps\sum_{j=0}^{N}\left(\dfrac{\mathrm{d}\gamma_j}{\mathrm{d}t}(t)\dfrac{e^{i\omega_j(t)}P_n(e^{i\omega_j(t)};\tau)}{(e^{i\omega_j(t)}-\xi)(e^{i\omega_j(t)}-\zeta(\tau))}\overline{P_n(e^{i\omega_j(t)};t)}\right.\nonumber\\[7pt]
&\qquad \qquad \qquad \left.+\gamma_j(t)\dfrac{\partial}{\partial t}\left(\dfrac{e^{i\omega_j(t)}P_n(e^{i\omega_j(t)};\tau)}{(e^{i\omega_j(t)}-\xi)(e^{i\omega_j(t)}-\zeta(\tau))}\right)\overline{P_n(e^{i\omega_j(t)};t)}\right).\nonumber
\end{align}
In addition, note that
\begin{align*}
\dfrac{e^{i\omega_j(t)}P_n(e^{i\omega_j(t)};\tau)}{(e^{i\omega_j(t)}-\xi)(e^{i\omega_j(t)}-\zeta(\tau))}=e^{i\omega_j(t)}\dps\prod_{k=1}^{n-2}(e^{i\omega_j(t)}-\zeta_k(\tau)).
\end{align*}
Hence,
\begin{align}\label{ds1}
&\dfrac{\partial}{\partial t}\left(\dfrac{e^{i\omega_j(t)}P_n(e^{i\omega_j(t)};\tau)}{(e^{i\omega_j(t)}-\xi)(e^{i\omega_j(t)}-\zeta(\tau))}\right)
\\[7pt]
&=i\dfrac{\mathrm{d}\omega_j}{\mathrm{d}t}(t)\dfrac{e^{i\omega_j(t)}P_n(e^{i\omega_j(t)};\tau)}{(e^{i\omega_j(t)}-\xi)(e^{i\omega_j(t)}-\zeta(\tau))}\left(1+\dps\sum_{k=1}^{n-2}\dfrac{e^{i\omega_j(t)}}{e^{i\omega_j(t)}-\zeta_k(\tau)}\right).\nonumber
\end{align}
Furthermore,
\begin{align}\label{ds2}
\dfrac{\mathrm{d}P_n}{\mathrm{d} t}(e^{i\omega_j(t)};t)
&=\left.\dfrac{\partial P_n}{\partial t}(e^{i\theta};t)\right|_{\theta=\omega_j(t)}+i\dfrac{\mathrm{d}\omega_j}{\mathrm{d}t}(t)\dps\sum_{k=1}^{n}\dfrac{e^{i\omega_j(t)}P_n(e^{i\omega_j(t)};t)}{e^{i\omega_j(t)}-\zeta_k(t)}.
\end{align}
Substituting  \eqref{ds1} and \eqref{ds2} into \eqref{pdpo}, we can assert that
\begin{align}\label{pdpods}
&-\dps\sum_{j=0}^{N}\gamma_j(t)\dfrac{e^{i\omega_j(t)}P_n(e^{i\omega_j(t)};\tau)}{(e^{i\omega_j(t)}-\xi)(e^{i\omega_j(t)}-\zeta(\tau))}\left.\overline{\dfrac{\partial P_n}{\partial t}(e^{i\theta};t)}\right|_{\theta=\omega_j(t)}\\[7pt]
=&\dps\sum_{j=0}^{N}\dfrac{e^{i\omega_j(t)}P_n(e^{i\omega_j(t)};\tau)\overline{P_n(e^{i\omega_j(t)};t)}}{(e^{i\omega_j(t)}-\xi)(e^{i\omega_j(t)}-\zeta(\tau))}\nonumber\\[7pt]
& \quad \! \times \! \left(\!\dfrac{\mathrm{d}\gamma_j}{\mathrm{d}t}(t)\!+\!i\gamma_j(t)\dfrac{\mathrm{d}\omega_{j}}{\mathrm{d}t}(t)\!\left(1\!-\!\dps\sum^{n}_{k=1}\dfrac{e^{-i\omega_j(t)}}{e^{-i\omega_j(t)}-\overline{\zeta_k(t)}}\!+\!\dps\sum^{n-2}_{k=1}\dfrac{e^{i\omega_j(t)}}{e^{i\omega_j(t)}-\zeta_k(t)}\right)\!\!\right).\nonumber
\end{align}
Multiplying \eqref{pdpods} by $i(\xi-\zeta(\tau))$ and letting $\tau\rightarrow t$ yields
\begin{align}\label{pdpodsl}
&\dps\sum_{j=0}^{N}\gamma_j(t)\dfrac{ie^{i\omega_j(t)}(\xi-\zeta(t))P_n(e^{i\omega_j(t)};t)}{(e^{i\omega_j(t)}-\xi)(e^{i\omega_j(t)}-\zeta(t))}\left.\overline{\dfrac{\partial P_n}{\partial t}(e^{i\theta};t)}\right|_{\theta=\omega_j(t)}\\[7pt]
=&\dps\sum_{j=0}^{N}\dfrac{ie^{i\omega_j(t)}(\zeta(t)-\xi)\left|P_n(e^{i\omega_j(t)};t)\right|^2}{(e^{i\omega_j(t)}-\xi)(e^{i\omega_j(t)}-\zeta(t))}\nonumber\\[7pt]
& \quad\! \times \!\left(\!\dfrac{\mathrm{d}\gamma_j}{\mathrm{d}t}(t)\!+\!i\gamma_j(t)\dfrac{\mathrm{d}\omega_{j}}{\mathrm{d}t}(t)\!\left(1\!-\!\dps\sum^{n}_{k=1}\dfrac{e^{-i\omega_j(t)}}{e^{-i\omega_j(t)}-\overline{\zeta_k(t)}}\!+\!\dps\sum^{n-2}_{k=1}\dfrac{e^{i\omega_j(t)}}{e^{i\omega_j(t)}-\zeta_k(t)}\right)\!\right).\nonumber
\end{align}
It is straightforward to check that
\begin{align}
s(\theta;t)&=\dfrac{i(\zeta(t)-\xi)e^{i\theta}}{(e^{i\theta}-\xi)(e^{i\theta}-\zeta(t))}, \label{s1'}\\[7pt]
S(\theta;t)&=-i\left(1-\dps\sum^{n}_{k=1}\dfrac{e^{-i\theta}}{e^{-i\theta}-\overline{\zeta_k(t)}}+\dps\sum^{n-2}_{k=1}\dfrac{e^{i\theta}}{e^{i\theta}-\zeta_k(t)}\right).\label{s2'}
\end{align}
Hence, combining \eqref{C} and \eqref{pdpodsl},
\begin{align}\label{C1}
\dfrac{\mathrm{d}\varphi}{\mathrm{d}t}(t)\dps \int\left|\dfrac{P_n(e^{i\theta};t)}{e^{i\theta}-\zeta(t)}\right|^2\mathrm{d}\mu(\theta;t)& =\dps\sum_{j=0}^{N}W_j(t)\left|P_n(e^{i\omega_j(t)};t)\right|^2,
\end{align}
and the second part of the theorem follows. The last part of the theorem follows directly from \eqref{C1}. 

\end{proof}

We also obtain a similar result to Theorem \ref{main} whenever a POPUC has a pair of complex conjugate zeros.

\begin{theorem}\label{conjugatezeros}
Assume the hypotheses and notation of Theorem \ref{main} and its proof, except that $P(e^{i\theta_0}; t)=0$. 
Suppose that $P(\overline{\zeta(t)}; t)=0$ and $\varphi(t)\in (0,\pi)$ $({\rm mod[-\pi, \pi)})$  for each $t \in I_\epsilon(t_0)$. Denote the zeros of $P(\cdot;t)$ by $\zeta_k(t)=e^{i\varphi_k(t)}$ $(k=1,\dots,n)$ and define
\begin{align}
\widetilde{s}(\theta;t)& = \dfrac{1}{2}\dfrac{1}{\cos{(\varphi(t))}-\cos(\theta)}, \label{sconj}\\
\widetilde{S}(\theta;t)& = \dfrac{\sin{(\theta)}}{\cos{(\theta)}-\cos(\varphi(t))}+\dps\sum_{k=1}^{n-2}\cot{\left(\dfrac{\varphi_k(t)-\theta}{2}\right)},\label{Sconj}
\end{align}
where we have set $\varphi(t) = \varphi_{n}(t)=-\varphi_{n-1}(t)$. Then $\zeta(t)=e^{i\varphi(t)}$ moves strictly counterclockwise along $\mathbb{S}^1$ as $t$ increases on $I_\epsilon(t_0)$, provided that
\begin{equation*}
\widetilde{W}_j(t) = \widetilde{s}(\omega_j(t);t)\,\dfrac{\mathrm{d}\gamma_j}{\mathrm{d}t}(t)-\gamma_j(t)\widetilde{s}(\omega_j(t);t)\widetilde{S}(\omega_j(t);t)\dfrac{\mathrm{d}\omega_j}{\mathrm{d}t}(t)
\end{equation*}
is nonnegative for all $j$ and nonzero for at least one value of $j$, and $\omega_j(t) \neq \varphi_k(t)$ for all $j,k$. Furthermore, if $\widetilde{W}_j(t)=0$ for all $t\in I_\epsilon(t_0)$ and all $j$, then $\zeta(t)$ does not move as $t$ increases on $I_{\epsilon}(t_0)$.

\end{theorem}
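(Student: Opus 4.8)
The plan is to run the argument of Theorem~\ref{main} essentially verbatim, with the fixed anchor $\xi=e^{i\theta_0}=\zeta_{n-1}(t)$ replaced throughout by the conjugate zero $\overline{\zeta(t)}=e^{-i\varphi(t)}$, which by hypothesis equals $\zeta_{n-1}(t)$. Existence, uniqueness and differentiability of $\zeta(\cdot)$ are inherited from the first part of Theorem~\ref{main}. For the motion I would first form the degree $n-1$ polynomial $\frac{zP_n(z;\tau)}{(z-\zeta(\tau))(z-\overline{\zeta(\tau)})}$, which vanishes at the origin because both $\zeta(\tau)$ and $\overline{\zeta(\tau)}$ are zeros of $P_n(\cdot;\tau)$, and insert it into \eqref{exp}; this yields the exact analogue of \eqref{po}. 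Differentiating in $t$ with $\tau$ frozen and substituting the analogues of \eqref{ds1} and \eqref{ds2} (which are literally unchanged, since freezing $\tau$ makes every removed zero constant in $t$ and only $e^{i\omega_j(t)}$ carries the $t$-dependence) produces the analogue of \eqref{pdpods} with $\xi$ replaced by $\overline\zeta$. None of these steps is affected by the motion of the anchor.

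The single genuinely new point --- and the step I expect to be the main obstacle --- is the analogue of the formula \eqref{C} for $\mathrm{d}\varphi/\mathrm{d}t$. In Theorem~\ref{main} the anchor is fixed, so $\frac{\partial P_n}{\partial t}(\xi;t)=0$, and this is exactly what lets one identify the numerator of \eqref{C} with $\varphi'D$, where $D=\int|P_n/(e^{i\theta}-\zeta)|^2\,\mathrm{d}\mu>0$. Here the anchor moves: differentiating $P_n(\overline{\zeta(t)};t)=0$ and using $\frac{\mathrm{d}}{\mathrm{d}t}\overline\zeta=-i\varphi'\overline\zeta$ gives $\frac{\partial P_n}{\partial t}(\overline\zeta;t)=i\varphi'\,\overline\zeta\,\frac{\partial P_n}{\partial z}(\overline\zeta;t)\neq0$. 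Redoing the derivation of \eqref{C} with $\overline\zeta$ in place of $\xi$, this extra term is carried along and, by one further application of \eqref{property} at $\overline\zeta$ (which gives $\overline{\frac{\partial P_n}{\partial z}(\overline\zeta)}\int P_n/(e^{i\theta}-\overline\zeta)\,\mathrm{d}\mu=\int|P_n/(e^{i\theta}-\overline\zeta)|^2\,\mathrm{d}\mu=:D_{\overline\zeta}>0$), contributes exactly $\varphi'D_{\overline\zeta}$. Hence the naive substitution $\xi\mapsto\overline\zeta$ into the numerator of \eqref{C} computes not $\varphi'D$ but $\varphi'(D+D_{\overline\zeta})$, and the correct identity is $\varphi'(D+D_{\overline\zeta})=\sum_j\gamma_j\frac{ie^{i\omega_j}(\overline\zeta-\zeta)P_n}{(e^{i\omega_j}-\overline\zeta)(e^{i\omega_j}-\zeta)}\overline{\frac{\partial P_n}{\partial t}}$, the coefficient of $\varphi'$ remaining strictly positive.

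Identifying the closed forms \eqref{sconj}--\eqref{Sconj} is then a direct trigonometric computation. From $(e^{i\theta}-\overline\zeta)(e^{i\theta}-\zeta)=e^{i\theta}\cdot 2(\cos\theta-\cos\varphi)$ and $\zeta-\overline\zeta=2i\sin\varphi$, the prefactor in the numerator above equals $\pm\,2\sin\varphi\,\widetilde{s}(\theta;t)$ with $\widetilde{s}$ as in \eqref{sconj}; since $\varphi(t)\in(0,\pi)$ we have $\sin\varphi>0$, so this positive factor (its sign handled exactly as in the passage from \eqref{pdpods} to \eqref{pdpodsl}) is moved onto the left, turning $D+D_{\overline\zeta}$ into the still strictly positive coefficient $(D+D_{\overline\zeta})/(2\sin\varphi)$ and leaving precisely $\widetilde{s}$. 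For $\widetilde{S}$ I would note that the bracketed sum is the one already appearing in \eqref{s2'}; expanding its summands by $\frac{e^{i\theta}}{e^{i\theta}-\zeta_k}=\frac12+\frac i2\cot\frac{\varphi_k-\theta}{2}$ and $\frac{e^{-i\theta}}{e^{-i\theta}-\overline{\zeta_k}}=\frac12-\frac i2\cot\frac{\varphi_k-\theta}{2}$ collapses it, as in Theorem~\ref{main}, to $\sum_{k=1}^{n}\frac{1}{1+\delta_{k,n-1}+\delta_{k,n}}\cot\frac{\varphi_k-\theta}{2}$; inserting $\varphi_{n-1}=-\varphi$, $\varphi_n=\varphi$ and combining the two half-weighted terms through $\cot\frac{\varphi-\theta}{2}-\cot\frac{\varphi+\theta}{2}=\frac{2\sin\theta}{\cos\theta-\cos\varphi}$ produces $\frac{\sin\theta}{\cos\theta-\cos\varphi}+\sum_{k=1}^{n-2}\cot\frac{\varphi_k-\theta}{2}$, which is \eqref{Sconj}.

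Substituting these closed forms exactly as in the passage to \eqref{C1} then yields $\varphi'(t)\cdot\frac{D+D_{\overline\zeta}}{2\sin\varphi}=\sum_{j=0}^{N}\widetilde{W}_j(t)\,|P_n(e^{i\omega_j(t)};t)|^2$ with strictly positive coefficient. Since $\omega_j\neq\varphi_k$ forces $|P_n(e^{i\omega_j})|^2>0$, the sign of $\varphi'$ equals that of the right-hand side: if every $\widetilde{W}_j\geq0$ with at least one strict, then $\varphi'>0$ and $\zeta$ moves strictly counterclockwise, while $\widetilde{W}_j\equiv0$ for all $j$ gives $\varphi'\equiv0$. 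The delicate part throughout is isolating the extra $D_{\overline\zeta}$ produced by the moving anchor; once that term is pinned down, the rest of the argument runs parallel to Theorem~\ref{main}.
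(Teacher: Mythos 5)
Your proposal is correct and follows essentially the same route as the paper: the paper's equation for $\mathrm{d}\varphi/\mathrm{d}t$ (imported from \cite[Theorem 3.2, (22)]{C22}) is exactly your identity $\varphi'(t)\,(D+D_{\overline{\zeta}})=2\sin\varphi(t)\sum_j\gamma_j\frac{ie^{i\omega_j}(\overline{\zeta}-\zeta)P_n}{(e^{i\omega_j}-\overline{\zeta})(e^{i\omega_j}-\zeta)}\overline{\tfrac{\partial P_n}{\partial t}}$ with $C(t)=D+D_{\overline{\zeta}}$, and the substitution $\xi\mapsto\overline{\zeta(t)}$ into the analogue of \eqref{pdpodsl} together with the closed forms \eqref{sconj}--\eqref{Sconj} is carried out identically. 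The only difference is presentational: you rederive the extra term $D_{\overline{\zeta}}$ coming from the moving anchor via \eqref{property}, whereas the paper cites \cite{C22} for that step.
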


\begin{proof}
Again, we can follow the proof of \cite[Theorem 3.2]{C22} and, using \cite[(22)]{C22}, write
\begin{equation}\label{czdphi}
C(t)\dfrac{\mathrm{d}\varphi}{\mathrm{d}t}(t)=2\Im(\zeta(t))
\dps\sum_{j=0}^{N}\dfrac{\gamma_j(t)e^{i\omega_j(t)}P_n(e^{i\omega_j(t)};t)}{(e^{i\omega_j(t)}-\zeta(t))(e^{i\omega_j(t)}-\overline{\zeta(t)})}\left.\overline{\dfrac{\partial P_n}{\partial t}(e^{i\theta};t)}\;\right|_{\theta=\omega_j(t)},
\end{equation}
where
\begin{equation}\label{Cconjugate}
C(t)=\int\left|\dfrac{P_n(e^{i\theta};t)}{e^{i\theta}-\zeta(t)}\right|^2\mathrm{d}\mu(\theta;t)+\int\left|\dfrac{P_n(e^{i\theta};t)}{e^{i\theta}-\overline{\zeta(t)}}\right|^2\mathrm{d}\mu(\theta;t)>0.
\end{equation}
Replacing $\xi$ by $\overline{\zeta(t)}$ in   \eqref{pdpodsl}, we can assert
\begin{align}\label{czpdpodsl}
&\dps\sum_{j=0}^{N}\gamma_j(t)\dfrac{ie^{i\omega_j(t)}(\overline{\zeta(t)}-\zeta(t))P_n(e^{i\omega_j(t)};t)}{(e^{i\omega_j(t)}-\overline{\zeta(t)})(e^{i\omega_j(t)}-\zeta(t))}\left.\overline{\dfrac{\partial P_n}{\partial t}(e^{i\theta};t)}\right|_{\theta=\omega_j(t)}\\[7pt]
=&\dps\sum_{j=0}^{N}\dfrac{ie^{i\omega_j(t)}(\zeta(t)-\overline{\zeta(t)})\left|P_n(e^{i\omega_j(t)};t)\right|^2}{(e^{i\omega_j(t)}-\overline{\zeta(t)})(e^{i\omega_j(t)}-\zeta(t))}\nonumber\\[7pt]
& \quad \times\!\left(\!\dfrac{\mathrm{d}\gamma_j}{\mathrm{d}t}(t)\!+\!i\gamma_j(t)\dfrac{\mathrm{d}\omega_{j}}{\mathrm{d}t}(t)\!\left(1\!-\!\dps\sum^{n}_{k=1}\dfrac{e^{-i\omega_j(t)}}{e^{-i\omega_j(t)}-\overline{\zeta_k(t)}}\!+\!\dps\sum^{n-2}_{k=1}\dfrac{e^{i\omega_j(t)}}{e^{i\omega_j(t)}-\zeta_k(t)}\right)\!\right).\nonumber
\end{align}
Again, following    \eqref{s1'} and \eqref{s2'}, it is straightforward to check that
\begin{align*}
2\Im{(\zeta(t))}&=i(\overline{\zeta(t)}-\zeta(t)),\\[7pt]
\widetilde{s}(\theta;t)&=-\dfrac{e^{i\theta}}{(e^{i\theta}-\overline{\zeta(t)})(e^{i\theta}-\zeta(t))}, \\[7pt]
\widetilde{S}(\theta;t)&=-i\left(1-\dps\sum^{n}_{k=1}\dfrac{e^{-i\theta}}{e^{-i\theta}-\overline{\zeta_k(t)}}+\dps\sum^{n-2}_{k=1}\dfrac{e^{i\theta}}{e^{i\theta}-\zeta_k(t)}\right).
\end{align*}
Hence, combining \eqref{czdphi} and \eqref{czpdpodsl}, we deduce
\begin{align}\label{czC1}
C(t)\dfrac{\mathrm{d}\varphi}{\mathrm{d}t}(t)& =2\Im{(\zeta(t))}\dps\sum_{j=0}^{N}\widetilde{W}_j(t)\left|P_n(e^{i\omega_j(t)};t)\right|^2, \nonumber
\end{align}
and the result follows as in the end of the proof of Theorem \ref{main}.
\end{proof}

Finally, combining \cite[Theorem 3.1]{C22} and Theorem \ref{main}, we can also study measures that consist of both continuous and discrete components.

\begin{theorem}\label{cd}
Let
\begin{equation}
\label{dmu}
\mathrm{d}\mu(\theta;t)=\omega(\theta;t)\mathrm{d}\mu(\theta)+\dps\sum_{j=0}^{N}\gamma_j(t)\delta(\theta-\omega_j(t))
\end{equation}
be a finite nonnegative measure on the unit circle parametrized by $z=e^{i\theta}$ $(\theta \in [\theta_0, \theta_0+2\pi))$ and depending on a parameter $t$ varying  in a real open interval containing $t_0$. Suppose that $\gamma_j(t)$ and $\omega_j(t)$ are differentiable functions of $t$, and that, for almost all $\theta\in[\theta_0,\theta_0+2\pi)$, the function $\omega(\theta;t)$ is finite and admits partial derivative with respect to $t$, and that there exists a $\mu(\theta)$-integrable function $\alpha$ such that $$\left|\dfrac{\partial\omega}{\partial t}(\theta;t)\right|\leq\alpha(\theta),$$ almost everywhere in $[\theta_0,\theta_0+2\pi)$.  Let $P(\cdot; t)$ be a nonconstant monic POPUC associated with $\mathrm{d}\mu(\theta;t)$.  Assume that $P(\zeta_0; t_0)=0$. Then there exist $\epsilon>0$ and $\delta>0$, and a unique function, $\zeta: I_\epsilon(t_0) \to C_\delta(\zeta_0)$, differentiable on $I_\epsilon(t_0)$, with $\zeta(t_0)=\zeta_0$, and such that $P(\zeta(t); t)=0$ for each $t \in I_\epsilon(t_0)$. Assume that $P(e^{i\theta_0};t)=0$ for each $t\in  I_\epsilon(t_0)$, with $\zeta_0\neq e^{i\theta_0}$. Denote the zeros of $P(\cdot;t)$ by $\zeta_k(t)=e^{i\varphi_k(t)}$ $(k=1,\dots,n)$ and define
\begin{equation}
\label{otcont}
W(\theta;t) = s(\theta;t) \left(\dfrac{1}{\omega(\theta;t)}\dfrac{\partial\omega}{\partial t}(\theta;t)-\dfrac{1}{\omega(\varphi(t);t)}\left.\dfrac{\partial\omega}{\partial t}(\theta;t)\right|_{\theta=\varphi(t)}\right),
\end{equation}
and 
\begin{align}
W_j(t)= &s(\omega_j(t);t)\dfrac{\mathrm{d}\gamma}{\mathrm{d}t}(t)-\gamma_j(t)s(\omega_j(t);t)S(\omega_j(t);t)\dfrac{\mathrm{d}\omega_j}{\mathrm{d}t}(t)\label{otdisc}\\[3pt]
&-\gamma_j(t)s(\omega_j(t);t)\dfrac{1}{\omega(\varphi(t);t)}\left.\dfrac{\partial\omega}{\partial t}(\theta;t)\right|_{\theta=\varphi(t)}, \nonumber
\end{align}
where $s(\theta;t)$ and $S(\theta;t)$ are given by \eqref{sf} and \eqref{Sf}, respectively, and we have set $\theta_0=\varphi_{n-1}(t)$ and $\varphi(t) = \varphi_{n}(t)$. Then $\zeta(t)=e^{i\varphi(t)}$ moves strictly counterclockwise on $I_\epsilon(t_0)$, provided
$$
f(\theta;t)=\dfrac{1}{\omega(\theta;t)}\dfrac{\partial\omega}{\partial t}(\theta;t)
$$
is an increasing function of $\theta$ on $(\theta_0,\theta_0+2\pi)$ and $W_j(t)\geq0$ for all $t\in I_\epsilon(t_0)$ and all $j$, with at least one of the conditions being strict. Furthermore, if $W(\theta;t)=W_j(t)=0$ for all $t\in I_\epsilon(t_0)$ and for all $j$, then $\zeta(t)$ does not move as $t$ increases on $I_{\epsilon}(t_0)$.


\end{theorem}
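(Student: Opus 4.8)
The plan is to read the measure \eqref{dmu} as a superposition of the absolutely continuous measure treated in \cite[Theorem 3.1]{C22} and the purely discrete measure treated in Theorem \ref{main}, exploiting that the formula for $\mathrm{d}\varphi/\mathrm{d}t$ is \emph{linear} in the $t$-derivative of the measure. First I would dispatch the existence, uniqueness and differentiability of $\zeta(t)$: these follow exactly as in the first part of Theorem \ref{main} (that is, from \cite[Proposition 3.1]{C22}) via the implicit function theorem, the integrable dominating function $\alpha$ securing the smoothness of the continuous contribution.

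Next I would derive a master identity refining \eqref{C1}. Setting $\xi=e^{i\theta_0}$ and following \cite[Theorem 3.1]{C22}, one writes $\mathrm{d}\varphi/\mathrm{d}t=N(t)/D(t)$, where the positive denominator is $D(t)=\int|P_n(e^{i\theta};t)/(e^{i\theta}-\zeta(t))|^2\,\mathrm{d}\mu(\theta;t)$ and the numerator $N(t)$ is the expression in the numerator of \eqref{C} with each $\sum_j\gamma_j(\,\cdot\,)$ replaced by $\int(\,\cdot\,)\,\mathrm{d}\mu(\theta;t)$; it collapses to \eqref{C} when $\omega\equiv0$. To eliminate $\partial P_n/\partial t$ I would mimic the computation \eqref{po}--\eqref{pdpodsl}, differentiating the paraorthogonality relation \eqref{exp} --- now valid for the combined measure --- applied to the degree $n-1$ polynomial $R(z;t)=zP_n(z;t)/[(z-\xi)(z-\zeta(t))]$. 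The discrete terms are processed verbatim, with \eqref{ds1}--\eqref{ds2} absorbing the motion of the mass points; the dominating function $\alpha$ legitimises differentiation under the integral sign, so that the absolutely continuous part contributes exactly one extra term, $\int R(e^{i\theta};t)\,\overline{P_n(e^{i\theta};t)}\,\tfrac{\partial\omega}{\partial t}(\theta;t)\,\mathrm{d}\mu(\theta)$.

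The decisive manoeuvre --- and the step I expect to be the main obstacle --- is to reorganise this extra term so that it reproduces \emph{both} the subtracted weight of \eqref{otcont} and the coupling summand of \eqref{otdisc} at once. Writing $\partial\omega/\partial t=f(\theta;t)\,\omega(\theta;t)$ with $f=\tfrac{1}{\omega}\tfrac{\partial\omega}{\partial t}$, I would add and subtract $f(\varphi(t);t)$ times the vanishing paraorthogonality functional \eqref{exp}. Since for the combined measure $\int R\,\overline{P_n}\,\omega\,\mathrm{d}\mu=-\sum_{j}\gamma_j\,R(e^{i\omega_j};t)\,\overline{P_n(e^{i\omega_j};t)}$, this replaces the weight $f(\theta;t)$ inside the integral by $f(\theta;t)-f(\varphi(t);t)$, yielding, after \eqref{s1'}, precisely $W(\theta;t)$ as in \eqref{otcont}; meanwhile the compensating multiple $-f(\varphi(t);t)$ of the discrete part is redistributed across the mass points, generating --- after the elementary sign bookkeeping already implicit in \eqref{pdpodsl} --- exactly the extra summand of \eqref{otdisc} that is absent from the $W_j$ of Theorem \ref{main} (here $\omega(\varphi(t);t)>0$ is used, as in the denominators of \eqref{otcont}--\eqref{otdisc}). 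Collecting terms through \eqref{s1'}--\eqref{s2'} gives the master identity
\begin{equation*}
\frac{\mathrm{d}\varphi}{\mathrm{d}t}(t)\,D(t)=\int_{\theta_0}^{\theta_0+2\pi} W(\theta;t)\,\bigl|P_n(e^{i\theta};t)\bigr|^2\,\omega(\theta;t)\,\mathrm{d}\mu(\theta)+\sum_{j=0}^{N}W_j(t)\,\bigl|P_n(e^{i\omega_j(t)};t)\bigr|^2.
\end{equation*}

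Finally I would extract the three conclusions. Because $\omega_j\neq\varphi_k$, each $|P_n(e^{i\omega_j(t)};t)|^2>0$, so $W_j(t)\ge0$ makes the discrete sum nonnegative; and the monotonicity of $f(\theta;t)$ in $\theta$ makes the continuous integral nonnegative by the argument of \cite[Theorem 3.1]{C22}, in which the sign pattern of $s(\theta;t)$ pairs with the increase of $f(\theta;t)-f(\varphi(t);t)$. If at least one contribution is strict, then $D(t)>0$ forces $\mathrm{d}\varphi/\mathrm{d}t>0$, so that $\zeta(t)$ moves strictly counterclockwise; and if $W(\theta;t)$ and every $W_j(t)$ vanish identically on $I_\epsilon(t_0)$, the right-hand side is identically zero, whence $\mathrm{d}\varphi/\mathrm{d}t\equiv0$ and $\zeta(t)$ is stationary, as claimed.
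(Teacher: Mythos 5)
Your proposal is correct and follows essentially the same route as the paper: the same master identity (the paper's \eqref{ccd3}) obtained by differentiating the paraorthogonality relation applied to $zP_n(z;t)/[(z-\xi)(z-\zeta(t))]$, with the continuous part handled by differentiation under the integral sign and the $f(\varphi(t);t)$ correction introduced by adding a multiple of the vanishing functional \eqref{exp} (the paper's \eqref{ccd2}), followed by the same sign analysis. No substantive differences to report.
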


\begin{proof}
Assume $P$ has fixed degree $n\geq2$ and write $P_n$ instead of $P$. The first part follows from the proof of \cite[Proposition 3.1]{C22} and the details are left to the reader.

Set $\xi=e^{i\theta_0}$. Following the proof of \cite[Theorem 3.1]{C22} and using \cite[(15)]{C22}, we obtain, after elementary computations,
\begin{align}\label{ccd}
C(t)\dfrac{\mathrm{d}\varphi}{\mathrm{d}t}(t)=&\int\dfrac{ie^{i\theta}(\xi-\zeta(t))P_n(e^{i\theta};t)}{(e^{i\theta}-\xi)(e^{i\theta}-\zeta(t))}\overline{\dfrac{\partial P_n}{\partial t}(e^{i\theta};t)}\omega(\theta;t)\mathrm{d}\mu(\theta)\\[7pt]
&+\dps\sum_{j=0}^{N}\gamma_j(t)\dfrac{ie^{i\omega_j(t)}(\xi-\zeta(t))P_n(e^{i\omega_j(t)};t)}{(e^{i\omega_j(t)}-\xi)(e^{i\omega_j(t)}-\zeta(t))}\left.\overline{\dfrac{\partial P_n}{\partial t}(e^{i\theta};t)}\right|_{\theta=\omega_j(t)},\nonumber
\end{align}
where
$$
C(t)=\dps \int\left|\dfrac{P_n(e^{i\theta};t)}{e^{i\theta}-\zeta(t)}\right|^2\mathrm{d}\mu(\theta;t)>0.
$$
\vspace{1mm}
Combining    \eqref{pdpodsl}, \eqref{ccd} and \cite[(17)]{C22} yields
\begin{align}\label{ccd1}
C(t)\dfrac{\mathrm{d}\varphi}{\mathrm{d}t}(t)&=\int s(\theta;t)\left|P_n(e^{i\theta};t)\right|^2\dfrac{\partial\omega}{\partial t}(\theta;t)\mathrm{d}\mu(\theta)\,\\[7pt]
&\!\quad+\!\dps\sum_{j=0}^{N}s(\omega_j(t);t)\!\left|P_n(e^{i\omega_j(t)};t)\right|^2\!\left(\!\dfrac{\mathrm{d}\gamma}{\mathrm{d}t}(t)\!-\!\gamma_j(t)\dfrac{\mathrm{d}\omega_j}{\mathrm{d}t}(t)S(\omega_j(t);t)\!\right)\!,\nonumber
\end{align}
where $s(\theta;t)$ and $S(\theta;t)$ are given by \eqref{s1'} and \eqref{s2'}, respectively. Now, from   \eqref{exp}, it is straightforward to check
\begin{align}\label{ccd2}
0=&\dps\int s(\theta;t)\left|P_n(e^{i\theta};t)\right|^2\dfrac{1}{\omega(\varphi(t);t)}\left(\left.\dfrac{\partial\omega}{\partial t}(\theta;t)\right|_{\theta=\varphi(t)}\right)\omega(\theta;t)\mathrm{d}\mu(\theta)\\[7pt]
&+\dps\sum_{j=0}^{N}\gamma_j(t)s(\omega_j(t);t)\left|P_n(e^{i\omega_j(t)};t)\right|^2\dfrac{1}{\omega(\varphi(t);t)}\left.\dfrac{\partial\omega}{\partial t}(\theta;t)\right|_{\theta=\varphi(t)}.\nonumber
\end{align}
Subtracting   \eqref{ccd2} from \eqref{ccd} yields
\begin{align}\label{ccd3}
C(t)\dfrac{\mathrm{d}\varphi}{\mathrm{d}t}(t)&=\!\int \!\left|P_n(e^{i\theta};t)\right|^2\!W(\theta;t)\omega(\theta;t)\mathrm{d}\mu(\theta)+\dps\sum_{j=0}^{N}|P_n(e^{i\omega_j(t)};t)|^2W_j(t).
\end{align}
Since $s(\theta;t)$ is negative for $\theta\in(\theta_0,\varphi(t))$ and positive for $\theta\in(\varphi(t),\theta_0+2\pi)$, and $f(\theta;t)$ is an increasing function of $\theta$ on $(\theta_0,\theta_0+2\pi)$, we conclude that $W(\theta;t)$ is nonnegative for $\theta\in(\theta_0,\varphi(t))\cup(\varphi(t),\theta_0+2\pi)$. Consequently, the integral on the right-hand side of \eqref{ccd3} is nonnegative (see \cite[Proof of Theorem 3.1]{C22}). By hypothesis, $W_j(t)\geq0$ for all $t\in I_{\epsilon}(t_0)$ and for all $j$, and, hence, the sum on the right-hand side of \eqref{ccd3} is nonnegative. 
Since at least one of the $W(\theta;t),W_0(t),\dots,W_N(t)$ is positive, it follows that the right-handside of \eqref{ccd3} is positive, and the second part of the theorem is proved. The last part of the theorem follows directly from \eqref{ccd3}.


\end{proof}

\begin{obs}\label{obsfix}
We can obtain some interesting results from Theorem \ref{cd} by assuming $W(\theta;t) = 0$ (which occurs, for example, when $\omega(\theta;t)$ is independent of $t$). Indeed, assuming the hypotheses and notation of Theorem \ref{cd} and $W(\theta;t) = 0$ for each $t\in I_{\epsilon}(t_0)$, we have the following:
\begin{enumerate}
\item Assume that $P_n(e^{i\omega_j(t)};t)=0$ for each $j=0,1,\dots,N$ and $t\in I_{\epsilon}(t_0)$, with $\omega_j(t)\neq\varphi(t)$ and $n>N+1$. Thus, the right-hand side of \eqref{ccd3} is equal to zero and, consequently,
$$
 \dfrac{\mathrm{d}\varphi_k}{\mathrm{d}t}(t) = 0,
$$
meaning that $\zeta(t)$ does not move as $t$ increases on $I_{\epsilon}(t_0)$.

\item Assume that the mass points are fixed, i.e.,
$$
\dfrac{\mathrm{d}\omega_j}{\mathrm{d}t}(t)=0,
$$
for each $j=0,1,\dots,N$ and $t\in I_{\epsilon}(t_0)$. Furthermore, assume that $n\geq N+1$ and $P_n(e^{i\omega_j(t)};t)=0$ for each $j=0,1,\dots,N$ and $t\in I_{\epsilon}(t_0)$. Hence, the right-hand side of \eqref{ccd3} is equal to zero and, consequently, $\zeta(t)$ does not move as $t$ increases on $I_{\epsilon}(t_0)$.
\end{enumerate}

\end{obs}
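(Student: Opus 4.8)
The plan is to read both statements off the master identity \eqref{ccd3},
\[
C(t)\dfrac{\mathrm{d}\varphi}{\mathrm{d}t}(t)=\int \left|P_n(e^{i\theta};t)\right|^2 W(\theta;t)\,\omega(\theta;t)\,\mathrm{d}\mu(\theta)+\sum_{j=0}^{N}\left|P_n(e^{i\omega_j(t)};t)\right|^2 W_j(t),
\]
together with the strict positivity $C(t)>0$ established in Theorem \ref{cd}. The standing hypothesis $W(\theta;t)=0$ on $I_\epsilon(t_0)$ annihilates the continuous integral outright, so in both parts it suffices to show that the discrete sum vanishes identically; then $C(t)>0$ forces $\mathrm{d}\varphi/\mathrm{d}t\equiv0$, whence $\varphi$ is constant and $\zeta(t)=e^{i\varphi(t)}$ does not move.

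First I would treat the discrete sum term by term, using the hypothesis $P_n(e^{i\omega_j(t)};t)=0$, i.e. that each mass point sits at a zero of $P_n$. Away from coincidences the factor $|P_n(e^{i\omega_j})|^2=0$ kills the $j$-th summand instantly; the only issue is that here $\omega_j$ necessarily coincides with some $\varphi_k$, which is exactly where $S(\omega_j;t)$ (and, if $\omega_j\in\{\theta_0,\varphi\}$, also $s(\omega_j;t)$) becomes singular, so each summand is an indeterminate $0\cdot\infty$. The clean way to settle this is to pass to the manifestly regular form \eqref{ccd}: using the factorization $P_n(z;t)/((z-\xi)(z-\zeta(t)))=\prod_{k=1}^{n-2}(z-\zeta_k(t))$, the $j$-th discrete contribution there is proportional to $\prod_{k=1}^{n-2}(e^{i\omega_j(t)}-\zeta_k(t))\,\overline{\partial_t P_n}|_{\theta=\omega_j(t)}$, which carries no artificial poles.

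For part (1) the hypothesis $\omega_j\neq\varphi$ places $e^{i\omega_j}$ among the zeros other than the moving one, i.e. $e^{i\omega_j}\in\{\zeta_1,\dots,\zeta_{n-2},\xi\}$. If $e^{i\omega_j}$ is one of the free zeros $\zeta_1,\dots,\zeta_{n-2}$ the product $\prod_{k=1}^{n-2}(e^{i\omega_j}-\zeta_k)$ carries a vanishing factor; if instead $e^{i\omega_j}=\xi=e^{i\theta_0}$ then $P_n(e^{i\theta_0};t)\equiv0$ on $I_\epsilon(t_0)$ yields $\partial_t P_n|_{\theta=\theta_0}=0$. Either way the $j$-th term is zero, and the counting condition $n>N+1$ is precisely what allows $N+1$ distinct mass points to be hosted on zeros other than $\zeta$. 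For part (2), the condition $\mathrm{d}\omega_j/\mathrm{d}t=0$ deletes the $S(\omega_j;t)$-term from $W_j$, leaving at most the simple pole of $s(\omega_j;t)$; since $|P_n(e^{i\omega_j})|^2$ has a double zero wherever $\omega_j$ meets a zero, the product $|P_n(e^{i\omega_j})|^2W_j$ still vanishes by order counting. This is what permits $\omega_j=\varphi$ and relaxes the bound to $n\geq N+1$, whereas part (1), which tolerates $\mathrm{d}\omega_j/\mathrm{d}t\neq0$ and hence a double pole $s\cdot S$ at $\zeta$, must exclude $\omega_j=\varphi$.

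The main obstacle is exactly this coincidence bookkeeping: because the hypotheses deliberately park the mass points on zeros of $P_n$, the quantities $W_j$ appearing in the statement are singular there, and the assertion ``the right-hand side of \eqref{ccd3} is zero'' must be read through the regular form \eqref{ccd}, comparing in each case the order of vanishing of $|P_n(e^{i\omega_j})|^2$ against the order of the pole of $W_j$ (and invoking $\partial_t P_n|_{\theta_0}=0$ at the fixed zero). The subtlest point is a mass point at the moving zero $\omega_j=\varphi$, permitted in part (2): there one cannot simply discard the term, since $\partial_t P_n|_{\theta=\varphi}$ need not vanish a priori. Instead I would use the total-derivative relation $\partial_t P_n(\zeta;t)=-\partial_z P_n(\zeta;t)\,i\zeta\,\mathrm{d}\varphi/\mathrm{d}t$, which makes that contribution proportional to $\mathrm{d}\varphi/\mathrm{d}t$; moving it to the left-hand side, the crux is to check that the resulting coefficient stays nonzero, so that the identity still forces $\mathrm{d}\varphi/\mathrm{d}t=0$. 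Once every summand is seen to vanish (or to be absorbed), the conclusion is immediate from $C(t)>0$.
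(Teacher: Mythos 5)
Your proposal is correct, and it is in fact more careful than the paper's own justification, which is a one-liner: the remark simply asserts that since $W(\theta;t)=0$ kills the integral and $|P_n(e^{i\omega_j(t)};t)|^2=0$ kills each summand, the right-hand side of \eqref{ccd3} vanishes and $\mathrm{d}\varphi/\mathrm{d}t=0$ follows from $C(t)>0$. You take the same top-level route through \eqref{ccd3}, but you correctly flag what the paper glosses over: the hypotheses of the remark deliberately place the mass points at zeros of $P_n$, i.e.\ $\omega_j=\varphi_k$ for some $k$, which is precisely the degenerate configuration excluded in Theorems \ref{main} and \ref{cd} (where $\omega_j(t)\neq\varphi_k(t)$ is assumed so that $s$ and $S$, hence $W_j$, are finite). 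So each discrete summand in \eqref{ccd3} is formally $0\cdot\infty$, and your move of retreating to the regular pre-limit form \eqref{ccd}, using the factorization $P_n(z;t)/\bigl((z-\xi)(z-\zeta(t))\bigr)=\prod_{k=1}^{n-2}(z-\zeta_k(t))$ and the identity $\partial_t P_n(\xi;t)=0$ coming from the persistent zero at $\xi$, is exactly the right way to make the paper's assertion rigorous. Your zero-counting for the degree bounds ($n>N+1$ in part (1) versus $n\geq N+1$ in part (2)) also matches the role these hypotheses play.

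The one step you leave open --- the nonvanishing of the modified coefficient after absorbing the $\omega_j=\varphi$ term of part (2) into the left-hand side --- can be closed more simply, making the absorption unnecessary. In part (2) the mass points are fixed, so differentiating the identity $P_n(e^{i\omega_j};t)=0$ with respect to $t$ gives $\partial_t P_n(e^{i\omega_j};t)=0$ outright, for every $j$, including a mass point sitting at the moving zero; hence every discrete term in \eqref{ccd} vanishes and no coefficient check is needed. (If moreover $e^{i\omega_j}=\zeta(t)$ at some instant, combining $\partial_t P_n(\zeta(t);t)=0$ with your chain-rule relation $\partial_t P_n(\zeta;t)=-\partial_z P_n(\zeta;t)\,i\zeta\,\mathrm{d}\varphi/\mathrm{d}t$ and the simplicity of the zeros forces $\mathrm{d}\varphi/\mathrm{d}t=0$ on the spot.) With that shortcut your argument is complete and, unlike the printed remark, actually addresses why the singular $W_j$'s cause no harm.
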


\section{Applications}

With the above results, we can now study the behaviour of the zeros of the POPUC considered in the introduction of this note.

\begin{eje}[Bernstein-Szeg\H{o}]
Let
$$
d\mu(\theta;\lambda;\gamma,\omega)=\frac{1-|\lambda|^2}{|1-\lambda e^{i \theta}|}\,\frac{\mathrm{d}\theta}{2\pi}+\gamma\, \delta(\theta-\omega),
$$
where $\lambda \in \mathbb{D}$, $\gamma \in (0, \infty)$, and $\omega \in [\theta_0,\theta_0+ 2\pi)$. 
We have $W(\theta;\gamma)=0$ and (see Figure \ref{fig4})
$$
W_0(\gamma)=\dfrac{\sin\left(\dfrac{\varphi(\gamma)-\theta_0}{2}\right)}{2\sin\left(\dfrac{\varphi(\gamma)-\omega}{2}\right)\sin\left(\dfrac{\theta_0-\omega}{2}\right)}
\begin{cases}
>0,\quad\varphi(\gamma)\in(\theta_0,\omega),\\[15pt]
<0,\quad\varphi(\gamma)\in(\omega,\theta_0+2\pi).
\end{cases}
$$
\begin{figure}[H]
\centering
\includegraphics[scale=0.7]{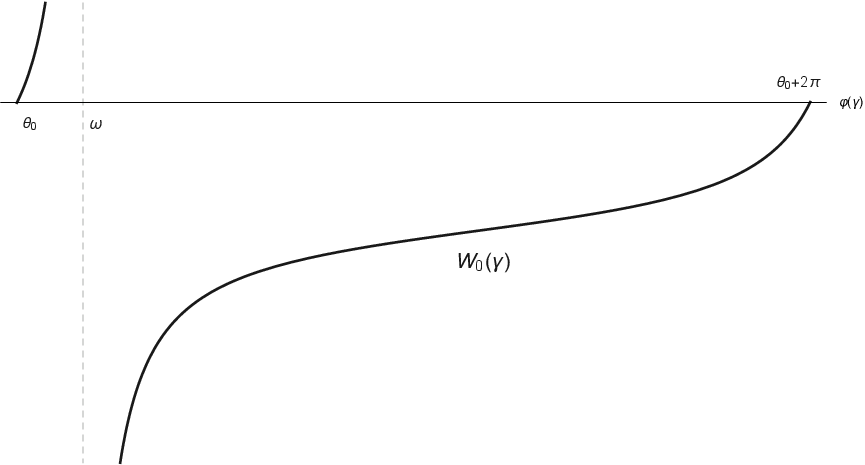}
\caption{The function $W_0$ for $\varphi(\gamma)\in[\theta_0,\theta_0+2\pi)$.}
\label{fig4}
\end{figure}
Therefore, by Theorem \ref{cd}, $\zeta(\gamma)$ moves strictly counterclockwise on $(\theta_0,\omega)$ and strictly clockwise on $(\omega,\theta_0+2\pi)$ as $\gamma$ increases (see Figure \ref{fig0}).
\end{eje}

\begin{eje}[Single inserted mass point]
Let
$$
\mathrm{d}\mu(\theta;\gamma)=(1-\gamma)\dfrac{\mathrm{d}\theta}{2\pi}+\gamma\delta(\theta-0),\qquad \gamma\in(0,1).
$$
We have $W(\theta;\gamma)=0$ and (see Figure \ref{fig1})
$$
W_0(\gamma)=\dfrac{1}{2(1-\gamma)}\dfrac{\sin\left(\dfrac{\varphi(\gamma)-\theta_0}{2}\right)}{\sin\left(\dfrac{\varphi(\gamma)}{2}\right)\sin\left(\dfrac{\theta_0}{2}\right)}
\begin{cases}
>0,\quad\varphi(\gamma)\in(\theta_0,2\pi),\\[15pt]
<0,\quad\varphi(\gamma)\in(2\pi,\theta_0+2\pi).
\end{cases}
$$
\begin{figure}[H]
\centering
\includegraphics[scale=0.7]{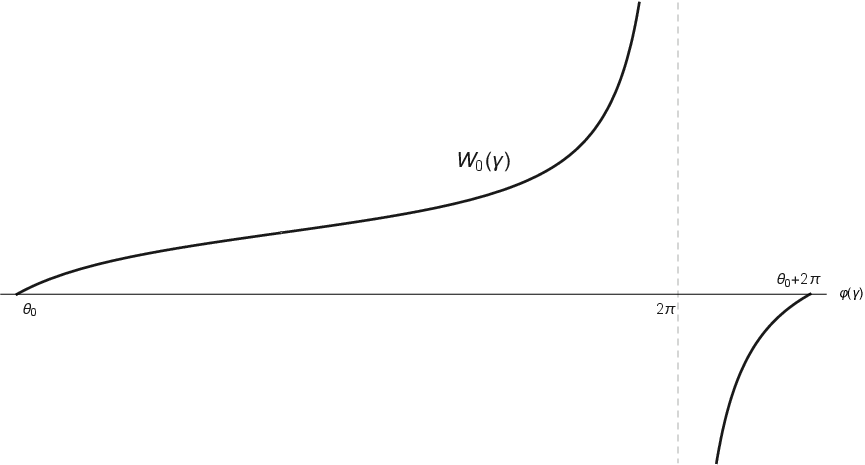}
\caption{The function $W_0$ for $\varphi(\gamma)\in[\theta_0,\theta_0+2\pi)$.}
\label{fig1}
\end{figure}
Therefore, by Theorem \ref{cd}, $\zeta(\gamma)$ moves strictly counterclockwise on $(\theta_0,2\pi)$ and strictly clockwise on $(2\pi,\theta_0+2\pi)$ as $\gamma$ increases, provided $\theta_0\in(0,2\pi)$ and $\varphi(\gamma)\in(\theta_0,\theta_0+2\pi)$ (see Figure \ref{fig2}).
\end{eje}

\section*{Acknowledgements}
The authors wish to express gratitude to their advisor Kenier Castillo for drawing attention to this problem, the continuous encouragement and constructive guidance.
This work is partially supported by the Centre for Mathematics of the University of Coimbra (funded by the Portuguese Government through FCT/MCTES, DOI 10.54499/UIDB/00324/2020). 

GGN is supported by the FCT grant UI.BD.154694.2023. AS is supported by the FCT grant 2021.05089.BD.

\bibliographystyle{plain}
\bibliography{bib} 

\begin{thebibliography}{1}

\bibitem{C19}
K.~Castillo.
\newblock On monotonicity of zeros of paraorthogonal polynomials on the unit
  circle.
\newblock {\em Linear Algebra Appl.}, 580:475--490, 2019.

\bibitem{C22}
K.~Castillo.
\newblock Markov's theorem for weight functions on the unit circle.
\newblock {\em Constr. Approx.}, 55(605-627), 2022.

\bibitem{DIK11}
P.~Deift, A.~Its, and I.~Krosovsky.
\newblock Asymptotics of {T}oeplitz, {H}ankel, and {T}oeplitz+{H}ankel
  determinants with {F}isher-{H}artwig singularities.
\newblock {\em Ann. of Math.}, 174:1243--1299, 2011.

\bibitem{GL20}
J.~S. Geronimus and K.~Liechty.
\newblock The fourier extension method and discrete orthogonal polynomials on
  an arc of the circle.
\newblock {\em Adv. Math.}, 365:107064, 2020.

\bibitem{G48}
Ya.~L. Geronimus.
\newblock Polynomials orthogonal on a circle and their applications.
\newblock In {\em Series and {A}pproximation}, volume~3 of {\em 1}, pages
  1--78. Amer. Math. Soc., 1962.

\bibitem{S05I}
B.~Simon.
\newblock {\em Orthogonal polynomials on the unit circle. {P}art {I}.
  {C}lassical {T}heory}, volume~54 of {\em Amer. Math. Soc. Coll. Publ.}
\newblock Amer. Math. Soc., Providence, RI, 2005.

\bibitem{S05II}
B.~Simon.
\newblock {\em Orthogonal polynomials on the unit circle. {P}art {II}.
  {S}pectral {T}heory}, volume~54 of {\em Amer. Math. Soc. Coll. Publ.}
\newblock Amer. Math. Soc., Providence, RI, 2005.

\bibitem{S11}
B.~Simon.
\newblock {\em Szeg\H{o}'s theorem and its descendants: {S}pectral theory for
  $L^2$ perturbations of orthogonal polynomials}.
\newblock M. B. Porter Lectures. Princeton University Press, Princeton, 2011.

\bibitem{W09}
M.~L. Wong.
\newblock Generalized bounded variation and inserting point masses.
\newblock {\em Constr. Approx.}, 30(1):1--15, 2009.

\end{thebibliography}
\end{document}